\documentclass[12pt,notitlepage]{amsart}
\usepackage{latexsym,amsfonts,amssymb,amsmath,amsthm}
\usepackage{graphicx}
\usepackage{color}
\usepackage[normalem]{ulem}
\pagestyle{headings}

\let\turc\c
\usepackage{etoolbox}
\robustify\turc 
\renewcommand{\c}{\mathfrak{c}}
\newcommand{\bpm}{\begin{pmatrix}}
\newcommand{\epm}{\end{pmatrix}}

\usepackage[inner=1.0in,outer=1.0in,bottom=1.0in, top=1.0in]{geometry}

\newcommand{\mz}{\ensuremath{\mathbb Z}}




\newcommand{\intR}{\int_{-\infty}^{\infty}}

\newcommand{\sumstar}{\sideset{}{^*}\sum}

\theoremstyle{plain}		
	\newtheorem{mytheo}{Theorem} [section]
	
	\newtheorem{myprop}[mytheo]{Proposition}
	
     \newtheorem{mylemma}[mytheo]{Lemma}

	\newtheorem{myexample}[mytheo]{Example}

\theoremstyle{remark}

\numberwithin{equation}{section}
\numberwithin{figure}{section}

\begin{document}

\author{Matthew P. Young}
 \address{Department of Mathematics \\
 	  Texas A\&M University \\
 	  College Station \\
 	  TX 77843-3368 \\
 		U.S.A.}

 \email{myoung@math.tamu.edu}
 \thanks{This material is based upon work supported by the National Science Foundation under agreement No. DMS-2001306.  Any opinions, findings and conclusions or recommendations expressed in this material are those of the authors and do not necessarily reflect the views of the National Science Foundation.  }

\begin{abstract} 
We prove an improved spectral large sieve inequality for the family of $SL_3(\mathbb{Z})$ Hecke-Maass cusp forms.
The method of proof uses duality and its structure reveals unexpected connections to Heath-Brown's large sieve for cubic characters.
\end{abstract}

 \title{An improved spectral large sieve inequality for $SL_3(\mathbb{Z})$}
\maketitle

\section{Introduction}
A large sieve inequality for a family of automorphic forms is a flexible and versatile tool that represents quantitative orthogonality properties of the family.  Strong results are known for $GL_1$ and $GL_2$ (e.g. see \cite{Montgomery} and \cite[Chapter 7]{IK} for some surveys), but progress has been more elusive in higher rank.  The
main focus of this article, the
$SL_3(\mathbb{Z})$ spectral large sieve, has seen some recent attention in a series of papers \cite{Blomer, Young, BlomerButtcane}.  For some other notable higher rank examples, see
\cite{DukeKowalski, Venkatesh, ThornerZaman}.

We set some notation before continuing the discussion on the large sieve.
Let $\mathcal{F}^{\text{cusp}}$ denote the family of Hecke-Maass cusp forms on $SL_3(\mathbb{Z})$.  
Similarly, let $\mathcal{F}^{\text{Eis}}$ denote the family of $SL_3(\mathbb{Z})$ Eisenstein series induced by $SL_2(\mathbb{Z})$ cusp forms (see \cite[Section 10.5]{Goldfeld} for a definition).  Let $\mathcal{F} = \mathcal{F}^{\text{cusp}} \cup \mathcal{F}^{\text{Eis}}$.
For $F \in \mathcal{F}$, let $\mu_F = \mu = (\mu_1, \mu_2, \mu_3) \in \mathfrak{a}_{\mathbb{C}}^*$ be its Langlands parameters, so the Ramanujan conjecture predicts that $\mu \in i \mathbb{R}^3$.  Let $\lambda_{F}(m,n)$ denote the Hecke eigenvalues of $F$.  Let $\Omega \subset \mathfrak{a}^*$ be compact, Weyl group invariant, and disjoint from the Weyl chamber walls.  
Let $B = B_{V}$ be a box of sidelength $V/100$, with $100 \leq V \leq T$ and $B_{V} \subset T \Omega$, and let $\mathcal{F}_{V} \subset \mathcal{F}$ denote the set of Hecke-Maass cusp forms and Eisenstein series with $\mu_F \in W(B)$, where $W$ is the Weyl group.  Write
\begin{equation}
\mathcal{F}_{V}^{\text{cusp}} = \mathcal{F}^{\text{cusp}} \cap \mathcal{F}_{V},
\qquad
\text{and}
\qquad
\mathcal{F}_{V}^{\text{Eis}} = \mathcal{F}^{\text{Eis}} \cap \mathcal{F}_{V}.
\end{equation}
 For $F \in \mathcal{F}^{\text{cusp}}$, let $\omega_F = \text{Res}_{s=1} L(F \otimes \overline{F}, s)$.  The Weyl law proved by Lapid and M\"{u}ller \cite{LM} gives that the cardinality of $\mathcal{F}_V^{\text{cusp}}$ is $V^2 T^{3+o(1)}$.
 Any potential violation to the Ramanujan conjecture must occur near the Weyl chamber walls (e.g. see \cite[p.678]{Blomer}), so automatically any cusp form with $\mu_F \in T \Omega$ satisfies Ramanujan.
For ${\bf a} \in \ell^2$, we use the notation $|{\bf a}| = \| {\bf a}\|_2$.

The culmination of \cite{Blomer, Young, BlomerButtcane} is the following.
\begin{mytheo}[\cite{BlomerButtcane}] 
\label{thm:BlomerButtcane}
We have
\begin{equation}
\label{eq:BlomerButtcaneWide}
\sum_{* \in \{ \text{cusp}, \text{Eis} \}}
\sum_{F \in \mathcal{F}_T^{*}} \frac{1}{\omega_F} 
\Big|
\sum_{N \leq n \leq 2N} a_n \lambda_{F}(1,n) \Big|^2
\ll (T^5 + T^2 N)^{1+\varepsilon} |{\bf a} |^2,
\end{equation}
\begin{equation}
\label{eq:BlomerButtcaneLocal}
\sum_{* \in \{ \text{cusp}, \text{Eis} \}}
\sum_{F \in \mathcal{F}_1^{*}} \frac{1}{\omega_F} 
\Big|
\sum_{N \leq n \leq 2N} a_n \lambda_{F}(1,n) \Big|^2
\ll (T^3 + T^2 N)^{1+\varepsilon} |{\bf a}|^2.
\end{equation}
\end{mytheo}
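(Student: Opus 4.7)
The plan is to convert the spectral sum on the left of \eqref{eq:BlomerButtcaneWide} or \eqref{eq:BlomerButtcaneLocal} to an arithmetic one via the $GL(3)$ Kuznetsov trace formula. Choose a smooth nonnegative test function $h$ on $\mathfrak{a}^{*}_{\mathbb{C}}$ majorizing the characteristic function of $W(B)$ — concretely, a bump adapted to $B$ summed over its Weyl orbit, with suitable holomorphic extension to guarantee decay of the integral transforms that will appear. Extending the spectral sum so that it is weighted by $h(\mu_F)/\omega_F$ (positivity makes this legal) and opening the square reduces matters to controlling
\begin{equation*}
\mathcal{S} := \sum_{N\leq m,n\leq 2N} a_m \overline{a_n}\, \Sigma(m,n),\quad \Sigma(m,n) := \sum_{*\in\{\mathrm{cusp},\mathrm{Eis}\}} \sum_{F\in\mathcal{F}^{*}} \frac{h(\mu_F)\, \lambda_F(1,m)\overline{\lambda_F(1,n)}}{\omega_F}.
\end{equation*}
Applying the $GL(3)$ Kuznetsov formula to $\Sigma(m,n)$ yields a diagonal term $\delta_{m=n}\,\widehat{h}$, where $\widehat{h}\asymp V^{2}T^{3}$ is the Plancherel mass of $h$, together with contributions from the nontrivial Weyl elements $w$, each expressible as a weighted sum of $GL(3)$ Kloosterman sums of type $w$ against an integral transform $\widetilde{h}_w$ of $h$. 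The diagonal alone contributes $\widehat{h}\,|{\bf a}|^{2}\ll V^{2}T^{3}|{\bf a}|^{2}$, which produces exactly the $T^{5}|{\bf a}|^2$ and $T^{3}|{\bf a}|^2$ terms of the two bounds by Weyl's law.

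For the off-diagonal, the shorter Weyl elements produce Kloosterman sums that essentially reduce to $GL(2)$ objects, because of the distinguished index $1$ in $\lambda_F(1,\cdot)$; their total contribution is absorbed by the classical $GL(2)$ spectral large sieve, well within the $T^{2}N$ budget. The decisive piece is the long Weyl element, whose contribution takes the shape
\begin{equation*}
\sum_{\pm}\sum_{c_1,c_2\geq 1}\frac{S(1,m;1,n;c_1,c_2)}{c_1 c_2}\,\widetilde{h}_{\pm}\!\left(\tfrac{m c_1}{c_2^{2}},\tfrac{n c_2}{c_1^{2}}\right),
\end{equation*}
where $S$ denotes the $GL(3)$ long-element Kloosterman sum and $\widetilde{h}_{\pm}$ is a double Bessel-type transform of $h$. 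Stationary-phase analysis of $\widetilde{h}_{\pm}$ localizes the moduli $c_1, c_2$ to an effective range, and Weil-type bounds for $S(1,m;1,n;c_1,c_2)$ then permit Cauchy--Schwarz in $(m,n)$ followed by Poisson summation to extract the saving $\ll T^{2+\varepsilon}N\,|{\bf a}|^{2}$, uniformly in the dimensional parameter $V$.

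The main obstacle is the sharp analysis of the oscillatory transform $\widetilde{h}_{\pm}$ across its entire parameter space, particularly in the transition regions between stationary-phase and non-oscillatory regimes, together with the uniform treatment of non-generic stationary points. This is precisely the technical thread progressively developed in \cite{Blomer, Young, BlomerButtcane}, with the analysis of \cite{BlomerButtcane} at last delivering the clean $T^{2}N$ off-diagonal bound in both the local ($V=1$) and the wide ($V=T$) regime simultaneously.
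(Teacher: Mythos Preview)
The paper does not itself prove Theorem~\ref{thm:BlomerButtcane}; it is quoted from \cite{BlomerButtcane}, with the only commentary being that ``the proof of Theorem~\ref{thm:BlomerButtcane} notably relies on the $GL_3$ Kuznetsov formula.''  Your sketch is consistent with that remark and with the general architecture of the argument in \cite{Blomer, Young, BlomerButtcane}: majorize by a smooth spectral weight, open the square, apply the $GL_3$ Kuznetsov formula, and separate the diagonal (giving the $V^2 T^3$ term) from the Weyl-element contributions.

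As a summary of the cited proof, your outline is broadly faithful but loose in places.  The assertion that the short Weyl elements ``are absorbed by the classical $GL(2)$ spectral large sieve'' understates the work involved; these terms involve the $GL_3$ hyper-Kloosterman-type sums $\widetilde S(\cdot)$ and require their own nontrivial analysis in \cite{BlomerButtcane}.  Likewise, the endgame for the long Weyl element is not really ``Cauchy--Schwarz in $(m,n)$ followed by Poisson summation''; the decisive input in \cite{BlomerButtcane} is a global decomposition of the long-element Kloosterman sum $S(1,m;1,n;c_1,c_2)$ into classical $GL_2$ Kloosterman sums, which is then combined with the $GL_2$ Kuznetsov formula and the $GL_2$ spectral large sieve.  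None of this affects the comparison with the present paper, since the paper simply cites the result, but if you intend your sketch to stand in for the actual proof you should adjust those two steps.
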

The formulations of \eqref{eq:BlomerButtcaneWide} and \eqref{eq:BlomerButtcaneLocal} are imprecise because we have not fully described the meaning of $\sum_{F \in \mathcal{F}^{\text{Eis}}} \omega_F^{-1}$ for $F$ an Eisenstein series; see e.g. 
\cite[Section 4]{BlomerButtcane} for the correct normalizing factor.
 The proof of Theorem \ref{thm:BlomerButtcane} notably relies on the $GL_3$ Kuznetsov formula.                                                                                                                                                                                                                                                                             The spectral side of the Kuznetsov formula includes both the cusp forms as well as Eisenstein series, which explains why Theorem \ref{thm:BlomerButtcane} covers both types of automorphic forms.

By general principles of bilinear forms (cf. \cite[Chapter 7]{IK}), the optimal bound one could expect on the right hand side of \eqref{eq:BlomerButtcaneWide} would be $(T^5 + N)^{1+\varepsilon}$, while that of
\eqref{eq:BlomerButtcaneLocal} would be $(T^3 + N)^{1+\varepsilon}$.  However, Blomer and Buttcane showed that the term $T^2 N$ in \eqref{eq:BlomerButtcaneWide} cannot be reduced in size, by constructing a choice of vector ${\bf a}$ so that the contribution from the Eisenstein series is at least $T^2 N |{\bf a}|^2$, for $N \gg T^{3+\delta}$.   An examination of the proof of \cite[Proposition 1.3]{BlomerButtcane}
shows their method leads to a lower bound of size $TN |{\bf a }|^2$ for the left hand side of \eqref{eq:BlomerButtcaneLocal}.
In Section \ref{section:lowerbound} we sketch an alternative method to produce this lower bound.

A natural question is if the bounds \eqref{eq:BlomerButtcaneWide}-\eqref{eq:BlomerButtcaneLocal} can be improved when the family is restricted to cusp forms.
The main result of this article affirms this.
\begin{mytheo}
\label{thm:mainthm}
We have
\begin{equation}
\label{eq:DualLargeSieveLocal}
\sum_{F \in \mathcal{F}_1^{\text{cusp}}} \frac{1}{\omega_F} 
\Big|
\sum_{N \leq n \leq 2N} a_n \lambda_{F}(1,n) \Big|^2
\ll (T^5 + N + T^2 N^{2/3})^{1+\varepsilon} |{\bf a}|^2.
\end{equation}
\end{mytheo}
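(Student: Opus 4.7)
My approach would be based on duality. By a standard argument, the inequality \eqref{eq:DualLargeSieveLocal} is equivalent to the dual bound
\[
\sum_{N \leq n \leq 2N} \Bigl| \sum_{F \in \mathcal{F}_1^{\text{cusp}}} b_F \lambda_F(1,n) \Bigr|^2 \ll (T^5 + N + T^2 N^{2/3})^{1+\varepsilon} \sum_F \omega_F |b_F|^2.
\]
After smoothing the $n$-sum with a weight $w$ supported on $[N,2N]$ and opening the square, the task reduces to bounding
\[
\mathcal{S} = \sum_{F, G} b_F \overline{b_G} \sum_{n \geq 1} \lambda_F(1,n) \overline{\lambda_G(1,n)} w(n).
\]
The diagonal contribution $F = G$ is handled by a Rankin--Selberg unfolding, giving $\sum_n |\lambda_F(1,n)|^2 w(n) \asymp \omega_F N$ and producing the $N$ term in the claimed bound.

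For $F \neq G$, the inner sum is essentially a smoothed partial sum of the $GL_3 \times GL_3$ Rankin--Selberg $L$-function $L(s, F \otimes \widetilde{G})$, whose analytic conductor for spectral parameters in a fixed box near $T\Omega$ is $\asymp T^6$. An approximate functional equation converts the $n$-sum to a dual sum of effective length $\asymp T^6/N$, with summand built from $\lambda_F(n_1, n_2) \overline{\lambda_G(n_1,n_2)}$ against an archimedean kernel. When $N \gg T^6$ the dual is trivial and a direct bound yields the $T^5$ contribution. In the more delicate range $N \ll T^6$ I would then invoke the $GL_3$ Voronoi formula of Miller--Schmid to peel off one of the dual variables, producing a sum over moduli of $GL_3$ hyper-Kloosterman-type exponential sums indexed by Weyl group elements.

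The critical observation, and the unexpected structural feature the abstract flags, is that one specific Bruhat cell produces exponential sums of cubic Gauss sum type $g(m,c) = \sum_{x \bmod c} e(mx^3/c)$, exactly the objects bounded by Heath-Brown's cubic large sieve; applying his estimate with the relevant lengths (so that the mass parameter behaves like $T^3$ and the dual variable like $N$) converts the naive $T^2 N$ contribution into $(T^3 N)^{2/3} = T^2 N^{2/3}$. The main obstacle I anticipate is twofold. First, the cubic Gauss sum identification must be made rigorous through the chain of Mellin inversions, Voronoi transformations, and Bruhat cell decompositions; careful bookkeeping of phases and Plancherel measures will be essential. Second, the argument must cleanly isolate the cusp spectrum from the Eisenstein spectrum, since the latter is precisely what forces the weaker bound \eqref{eq:BlomerButtcaneLocal}; a plausible route is to subtract the Eisenstein contribution directly, exploiting that its Hecke eigenvalues factor through divisor-like functions and so admit closed-form summation.
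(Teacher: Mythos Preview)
Your setup through the functional equation is on the right track, but the proposal goes astray in two places and misses the actual mechanism of the proof.

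First, the Dirichlet series $\sum_n \lambda_F(1,n)\overline{\lambda_G(1,n)} n^{-s}$ is \emph{not} the Rankin--Selberg $L$-function $L(s,F\otimes\overline{G})$; the latter has coefficients $\lambda_F(m,n)\overline{\lambda_G(m,n)}$ summed over $d^3 m^2 n$ (see \eqref{eq:RankinSelbergDefinition}). The paper deals with this by introducing auxiliary norms $\Delta_2$ and $\Delta_3$ (with summation over $m^2 n$ and $d^3 m^2 n$ respectively), proving the chain $\Delta_1 \le \Delta_2 \le \Delta_3$ together with reverse inequalities (Lemmas \ref{lemma:Delta3intermsofDelta2}--\ref{lemma:Delta3intermsofDelta1}) via the Hecke relations. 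Only $\Delta_3$ admits a clean functional equation (Lemma \ref{lemma:FunctionalEquation}), which after separation of the archimedean parameters gives
\[
\Delta^{(3)}(\mathcal{F}_1,N) \ll N + \tfrac{N}{T^3}(NT)^\varepsilon \max_{Z\ll T^6/N}\Delta^{(3)}(\mathcal{F}_1,Z).
\]
The restriction to cusp forms enters precisely here: for cusp forms $L(s,F\otimes\overline{G})$ has at most a pole at $s=1$ (giving the $N$ term), whereas Eisenstein series would contribute further poles. No subtraction of the Eisenstein spectrum is performed.

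Second, the connection to Heath-Brown is \emph{structural}, not arithmetic. No Voronoi formula is applied and no cubic Gauss sums appear; rather, the combinatorial inequalities relating $\Delta_1,\Delta_2,\Delta_3$ mirror Heath-Brown's recursion among his norms (see Section~\ref{section:cubic}), with the Hecke relation $\lambda_F(m,n)=\sum_{d\mid(m,n)}\mu(d)\lambda_F(m/d,1)\lambda_F(1,n/d)$ playing the role of complete multiplicativity of cubic residue symbols. After chaining the functional-equation step with Lemma \ref{lemma:Delta3intermsofDelta1} to return from $\Delta_3$ to $\Delta_1$ on the dual side, the final input is not a new exponential-sum bound but simply the Blomer--Buttcane estimate \eqref{eq:BlomerButtcaneLocal} applied at the shorter dual length; optimizing over $X,Y$ with $Y^2X\ll T^6/N$ is what produces the $T^2 N^{2/3}$ term. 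Your proposed Voronoi/hyper-Kloosterman route, while not obviously unworkable, is a different and substantially harder path, and the identification of a specific Bruhat cell with cubic Gauss sums is speculative and not what the abstract is signalling.
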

Note that the right hand side of \eqref{eq:DualLargeSieveLocal} is smaller than the right hand side of \eqref{eq:BlomerButtcaneLocal} for $N \gg T^{3+\varepsilon}$, and is also just as good as the ``$N + T^3$" theoretically optimal bound for $N \gg T^{6}$.

The starting point of our proof is to use the duality principle and the functional equation of Rankin-Selberg $L$-functions on $GL_3 \times GL_3$.  This method is most effective when $N$ is large, since this makes the dual length of summation relatively shorter.  The final step in our proof is an application of the Buttcane-Blomer bound \eqref{eq:BlomerButtcaneLocal}, which is strongest for relatively small values of $N$.

A curious aspect of the proof is that is reveals that certain aspects of the family $\mathcal{F}$ are in analogy with the family of cubic Hecke characters.  A large sieve inequality for this latter family was proved by Heath-Brown \cite{HeathBrownCubicSieve} with an application to  the problem of estimating sums of cubic Gauss sums of prime arguments.  See Section \ref{section:cubic} below for a more thorough discussion of Heath-Brown's work and its connections to our proof of Theorem \ref{thm:mainthm}.

For simplicity, Theorem \ref{thm:mainthm} is stated for the localized family $\mathcal{F}_1$ but in principle one could use the same method to study $\mathcal{F}_T$ as well.  Typically, small families are more difficult than large families, so one might expect that a bound on $\mathcal{F}_T$ would be even easier to prove than that for $\mathcal{F}_1$.  However, our proof of Theorem \ref{thm:mainthm} using duality requires the conductor of $L(1/2, F \otimes \overline{G})$ for $F, G \in \mathcal{F}$ which is a bit simpler to express for $F, G \in \mathcal{F}_1$ than for general $F, G \in \mathcal{F}_T$.  Generically, for $F, G \in \mathcal{F}_T$, the conductor of 
$L(1/2, F \otimes \overline{G})$ is of size $T^{9}$ but there are various conductor-dropping ranges to consider.  Indeed, for $F, G \in \mathcal{F}_1$, the conductor of $L(1/2, F \otimes \overline{G})$ is of size $T^{6}$.  As an aside, this discussion indicates that the approach via duality is beneficial when $N \gg T^{3+\delta}$ (since $T^3$ is the square-root of the conductor), consistent with the remark above that \eqref{eq:DualLargeSieveLocal} is an improvement in this range.

\section{Preliminaries}
\subsection{Maass forms on $SL_3(\mathbb{Z})$}
\begin{mylemma}[Hecke relations]
\label{lemma:Hecke}
Let $F \in \mathcal{F}$.  Then
\begin{equation}
\label{eq:HeckeRelation0}
\lambda_F(m,1) \lambda_F(1,n) = \sum_{d|(m,n)} \lambda_F\Big(\frac{m}{d}, \frac{n}{d}\Big),
\end{equation}
and
\begin{equation}
\label{eq:HeckeRelation}
\lambda_F(m,n) =  \sum_{d|(m,n)} \mu(d) \lambda_F\Big(\frac{m}{d}, 1\Big) \lambda_F\Big(1, \frac{n}{d}\Big).
\end{equation}
Moreover,
\begin{equation}
\label{eq:HeckeDuality}
\lambda_F(m,n) = \overline{\lambda_F}(n,m).
\end{equation}
\end{mylemma}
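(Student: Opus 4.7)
The plan is to derive all three relations from the structure of the $GL_3$ Hecke algebra, as developed for instance in Goldfeld's book, Chapter 6. For \eqref{eq:HeckeRelation0}, I would invoke the multiplication rule
\begin{equation*}
T(m,1)\, T(1,n) = \sum_{d \mid (m,n)} T(m/d,\, n/d)
\end{equation*}
in the $SL_3(\mathbb{Z})$ Hecke algebra, obtained from the standard double-coset decomposition. Applying both sides to a Hecke eigenform $F$ normalized so that $\lambda_F(1,1) = 1$ and reading off eigenvalues yields \eqref{eq:HeckeRelation0} immediately.

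For \eqref{eq:HeckeRelation}, I would perform M\"{o}bius inversion on \eqref{eq:HeckeRelation0}. Writing $g(m,n) := \lambda_F(m,1)\lambda_F(1,n)$ and $f(m,n) := \lambda_F(m,n)$, identity \eqref{eq:HeckeRelation0} reads $g(m,n) = \sum_{d \mid (m,n)} f(m/d, n/d)$. Substituting this expansion into the right-hand side of \eqref{eq:HeckeRelation} and interchanging the order of summation, the inner sum collapses via $\sum_{d \mid k} \mu(d) = 0$ for $k>1$, leaving $f(m,n)$.

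For \eqref{eq:HeckeDuality}, the key input is that complex conjugation sends a Hecke-Maass form to its contragredient. For a cusp form $F$, the form $\overline{F}$ is again a Hecke-Maass cusp form isomorphic to the contragredient $\widetilde{F}$, whose Hecke eigenvalues are obtained by swapping the two indices, $\lambda_{\widetilde{F}}(m,n) = \lambda_F(n,m)$; this can be seen either from the long Weyl element involution on the $GL_3$ Hecke algebra or from the effect on Langlands parameters $\mu \mapsto -w_0 \mu$. Combined with the tautology $\lambda_{\overline{F}}(m,n) = \overline{\lambda_F(m,n)}$, this yields \eqref{eq:HeckeDuality}. For an Eisenstein series, the identity can be read off the explicit formula for Hecke eigenvalues as symmetric polynomials in the Satake parameters of the inducing data.

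Since these are all bookkeeping consequences of the standard structure theory, there is no serious obstacle; the only mild care needed is to verify \eqref{eq:HeckeDuality} uniformly across both the cuspidal and Eisenstein members of $\mathcal{F}$, which follows directly from the explicit description of Hecke eigenvalues in each case.
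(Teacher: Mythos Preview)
Your proposal is correct and matches the paper's own treatment essentially line for line: the paper cites \cite[Theorem~6.4.11]{Goldfeld} for \eqref{eq:HeckeRelation0}, derives \eqref{eq:HeckeRelation} by M\"obius inversion, and cites \cite[Theorem~9.3.11~Addendum]{Goldfeld} for \eqref{eq:HeckeDuality}. Your contragredient description of \eqref{eq:HeckeDuality} is exactly what underlies that reference, so there is nothing to add.
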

The relation \eqref{eq:HeckeRelation0} appears in \cite[Theorem 6.4.11]{Goldfeld},  from which \eqref{eq:HeckeRelation} follows from M\"obius inversion.  For \eqref{eq:HeckeDuality}, see \cite[Theorem 9.3.11 Addendum]{Goldfeld}.

\begin{mylemma}[Convexity bound]
\label{lemma:convexity}
For any $F \in \mathcal{F}_T$ and any $X \geq 1$ we have
\begin{equation}
\label{eq:convexity}
\sum_{m^2 n \leq X} |\lambda_F(m,n)|^2 \ll_{\varepsilon} X (XT)^{\varepsilon}.
\end{equation}
\end{mylemma}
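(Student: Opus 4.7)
The plan is to study the Dirichlet series
\[
D(s) := \sum_{m, n \geq 1} \frac{|\lambda_F(m, n)|^2}{(m^2 n)^s},
\]
which by the $GL_3$ Rankin-Selberg method coincides up to $\zeta$-factors with $L(s, F \otimes \overline{F})$. Consequently $D(s)$ extends meromorphically to $\mathbb{C}$, is holomorphic on $\mathrm{Re}(s) > 1$, has a simple pole at $s = 1$ with residue proportional to $\omega_F \ll_\varepsilon T^\varepsilon$, and on any line $\sigma \in [1/2, 1]$ obeys a convexity bound $|D(\sigma + it)| \ll_\varepsilon (T(1+|t|))^{c(1-\sigma)+\varepsilon}$ for some absolute $c > 0$. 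With these inputs in hand, I would extract the partial sum by a smoothed Mellin--Barnes contour shift.

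Specifically, I would fix a non-negative $\phi \in C_c^\infty([0,2])$ with $\phi \geq \mathbf{1}_{[0,1]}$, so that
\[
\sum_{m^2 n \leq X} |\lambda_F(m,n)|^2 \;\leq\; \sum_{m, n \geq 1} |\lambda_F(m,n)|^2 \phi(m^2 n/X) \;=\; \frac{1}{2\pi i} \int_{(2)} D(s) \widetilde\phi(s) X^s \, ds,
\]
where $\widetilde\phi$ is the Mellin transform of $\phi$, rapidly decreasing in vertical strips. I would then shift the contour to the line $\sigma = 1 - 1/\log T$. The simple pole at $s = 1$ contributes a residue of size $\widetilde\phi(1) \cdot X \cdot \omega_F/\zeta(3) \ll X T^\varepsilon$. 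On the shifted contour, using the identity $T^{c/\log T} = e^c = O(1)$, the convexity bound yields $|D(\sigma + it)| \ll_\varepsilon T^\varepsilon (1+|t|)^\varepsilon$; the rapid decay of $\widetilde\phi$ makes the $t$-integral absolutely convergent, contributing at most $X^{1-1/\log T} T^\varepsilon \ll X T^\varepsilon$. Combining yields $\sum_{m^2 n \leq X} |\lambda_F(m,n)|^2 \ll X T^\varepsilon \leq X(XT)^\varepsilon$. For $F$ an Eisenstein series, $L(s, F \otimes \overline{F})$ factors into lower-degree $L$-functions and the identical argument applies.

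The main technical delicacy lies in the choice of contour. Shifting to a fixed line such as $\sigma = 1/2$ would yield an error $X^{1/2} T^{c/2+\varepsilon}$ that is only acceptable when $X$ exceeds a fixed power of $T$ and therefore fails to give $X(XT)^\varepsilon$ uniformly for $X \geq 1$. Placing the contour at distance $1/\log T$ from the pole is what keeps the factor $T^{c(1-\sigma)}$ of bounded size while still remaining strictly outside the pole, thereby absorbing the error into the main term for every $X \geq 1$.
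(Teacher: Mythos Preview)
The paper's ``proof'' is simply a citation to Xiannan Li's work on upper bounds for $L$-functions at the edge of the critical strip.  Your contour-shift extraction is the standard way to pass from an edge-of-strip bound to a coefficient-sum bound, and \emph{given the inputs you assume} it is correct.

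The gap is that those inputs are themselves the nontrivial content.  You assert that $\omega_F \ll_\varepsilon T^\varepsilon$ and that $D(\sigma+it) \ll (T(1+|t|))^{c(1-\sigma)+\varepsilon}$ for $\sigma\in[1/2,1]$.  Neither is free.  The residue bound is one of Li's main results.  The convexity bound in the form you state is \emph{not} the naive Phragm\'en--Lindel\"of bound: to get that shape one must already know $|D(1+it)|\ll T^\varepsilon$ on the $1$-line, which is again Li's theorem and, by partial summation, essentially equivalent to the very lemma you are proving.  A priori, from the Jacquet--Shalika/Luo--Rudnick--Sarnak bounds one only has absolute convergence of $D(s)$ for $\mathrm{Re}(s)$ well to the right of $1$, and interpolating against the functional equation yields merely $|D(1+it)|\ll T^A$ for some fixed $A>0$; your argument would then output $\sum_{m^2 n\le X}|\lambda_F(m,n)|^2 \ll X T^A$, not $X(XT)^\varepsilon$.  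Reducing $A$ to $\varepsilon$ is exactly what Li's paper accomplishes.

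So your sketch is best read as a reduction to Li rather than an independent argument---which is in fact what the paper does as well, just more tersely.
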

This follows from work of Xiannan Li \cite{XiannanLi}.

\begin{mylemma}
Let $F, G \in \mathcal{F}^{\text{cusp}}$.  The Rankin-Selberg $L$-function $L(s, F \otimes \overline{G})$ is defined by
\begin{equation}
\label{eq:RankinSelbergDefinition}
L(s, F \otimes \overline{G}) = \sum_{d, m, n \geq 1} \frac{\lambda_F(m,n) \overline{\lambda}_G(m,n)}{(d^3 m^2 n)^s}.
\end{equation}
It has meromorphic continuation to $s \in \mathbb{C}$ with a possible pole at $s=1$ only, and
satisfies the functional equation
\begin{equation}
\label{eq:RankinSelbergFunctionalEquation}
\gamma(s, F \otimes \overline{G}) 
L(s, F \otimes \overline{G})
=
\gamma(1-s, G \otimes \overline{F}) 
L(1-s, G \otimes \overline{F}),
\end{equation}
where
\begin{equation}
\label{eq:gammafactordef}
\gamma(s, F \otimes \overline{G}) 
= \gamma(s, \mu_F, \mu_G) 
=
\prod_{i,j=1}^{3} \Gamma_{\mathbb{R}}(s + \mu_i(F) + \overline{\mu_j}(G)).
\end{equation}
The pole at $s=1$ exists if and only if $F=G$.
\end{mylemma}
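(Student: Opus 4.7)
The plan is to establish all three parts simultaneously via the Rankin--Selberg integral representation (cf.\ \cite[Ch.~12]{Goldfeld}). Let $E(z,s)$ be an appropriate maximal-parabolic Eisenstein series on the $SL_3(\mathbb{Z})$ quotient of the generalized upper half plane, normalized so that its Fourier expansion at the cusp produces the scaling sum $\sum_d d^{-3s}$. I would consider the Petersson-type pairing
\begin{equation*}
I(s) = \int_{SL_3(\mathbb{Z})\backslash \mathfrak{h}^3} F(z)\,\overline{G(z)}\,E(z,s)\,dz,
\end{equation*}
which converges absolutely for $\real(s)$ sufficiently large since $F,G$ decay rapidly in the cusps. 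Unfolding $E(z,s)$ against its defining coset sum and inserting the Fourier--Whittaker expansions of $F$ and $\overline{G}$, the integral over the unipotent radical kills every pair of Fourier modes except those that match. After applying \eqref{eq:HeckeRelation0} to decouple the Hecke indices and summing the residual scaling integer $d$, the arithmetic part of $I(s)$ collapses to exactly the Dirichlet series \eqref{eq:RankinSelbergDefinition}.

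The archimedean factor that separates out is an integral of the two $GL_3$ Whittaker functions attached to $F$ and $G$ against a power of the torus coordinates. By Stade's formula for such $GL_3 \times GL_3$ Whittaker integrals, this evaluates to a constant times $\prod_{i,j}\Gamma_{\mathbb{R}}(s+\mu_i(F)+\overline{\mu_j}(G)) = \gamma(s,\mu_F,\mu_G)$, so (up to elementary factors) $I(s) = \gamma(s,\mu_F,\mu_G)\,L(s,F\otimes\overline{G})$.

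Meromorphic continuation and the functional equation of $L(s,F\otimes\overline{G})$ then transfer from the classical properties of $E(z,s)$ (Langlands, Selberg): $E(z,s)$ extends meromorphically to $\mathbb{C}$ with a unique simple pole at $s=1$ having a constant residue, and satisfies an intertwining functional equation relating $s$ and $1-s$. Dividing out $\gamma$ and using \eqref{eq:HeckeDuality} to swap $F\leftrightarrow G$ on the dual side (together with the evident symmetry of \eqref{eq:gammafactordef} under this swap) yields \eqref{eq:RankinSelbergFunctionalEquation}.

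For the pole dichotomy, take residues at $s=1$ in $I(s) = \gamma(s,\mu_F,\mu_G)L(s,F\otimes\overline{G})$. The residue of $E(z,s)$ at $s=1$ is a nonzero constant in $z$, so the residue of $I(s)$ equals a nonzero multiple of the Petersson inner product $\langle F,G\rangle$, while $\gamma(s,\mu_F,\mu_G)$ is holomorphic and nonvanishing at $s=1$. Standard orthogonality of Hecke--Maass cusp forms (they are joint eigenfunctions of a commutative self-adjoint algebra with distinct eigenvalue tuples) forces $\langle F,G\rangle \neq 0$ if and only if $F=G$, giving the claim. The main technical obstacle I expect is the Stade integral evaluation identifying the archimedean factor with the exact nine-fold gamma product in \eqref{eq:gammafactordef}; while well-established in the $GL_3$ literature, it is the intricate step.
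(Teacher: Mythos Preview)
The paper does not actually prove this lemma: it simply cites \cite[Theorem 7.4.9, Proposition 11.6.17]{Goldfeld} as a reference.  Your proposal outlines the standard Rankin--Selberg integral method (integrate $F\overline{G}$ against a mirabolic Eisenstein series, unfold, identify the archimedean factor via Stade's formula, and read off continuation, functional equation, and the pole structure from the Eisenstein series), which is precisely the approach developed in Goldfeld's book in the chapters preceding those citations.  So your sketch is correct and is in fact the content behind the reference the paper invokes; the paper merely treats the lemma as a black box.

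One small remark: in your final step you assert that distinct Hecke--Maass cusp forms have distinct Hecke eigenvalue tuples.  This is multiplicity one for $GL_3$ (Jacquet--Shalika), and it is worth naming explicitly rather than folding it into ``standard orthogonality,'' since orthogonality alone only tells you that eigenforms with \emph{different} eigenvalues are orthogonal.
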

For a reference, see \cite[Theorem 7.4.9, Proposition 11.6.17]{Goldfeld}.

\subsection{Separation of variables}
\begin{mylemma}
\label{lemma:Fourier}
Suppose $f$ is Schwartz-class.  Then
\begin{equation}
f(x) = \intR \widehat{f}(y) e(xy) dy,
\end{equation}
where $\|\widehat{f}\|_1 \ll \|f \|_1 + \| f'' \|_1$.  The implied constant is absolute.
\end{mylemma}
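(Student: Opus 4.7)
The statement is Fourier inversion plus the $L^1$-bound $\|\widehat{f}\|_1 \ll \|f\|_1 + \|f''\|_1$. The inversion itself is classical for Schwartz functions, so the real content is the norm bound. The plan is to prove two pointwise estimates for $\widehat{f}(y)$ and then interpolate between them by splitting the $y$-integral at an optimal threshold.

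First I would record the trivial bound $|\widehat{f}(y)| \le \|f\|_1$, which follows immediately from $\widehat{f}(y) = \intR f(x) e(-xy)\,dx$ and the triangle inequality. Next, integrating by parts twice in the definition of $\widehat{f}$ (permissible since $f$ is Schwartz, so all boundary terms vanish), one gets
\begin{equation}
\widehat{f}(y) = \frac{1}{(-2\pi i y)^2} \intR f''(x) e(-xy)\,dx,
\end{equation}
and hence $|\widehat{f}(y)| \le (2\pi y)^{-2} \|f''\|_1$ for $y \neq 0$.

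To finish, I would fix a parameter $Y > 0$ and split
\begin{equation}
\|\widehat{f}\|_1 = \int_{|y|\le Y} |\widehat{f}(y)|\,dy + \int_{|y|> Y} |\widehat{f}(y)|\,dy \ll Y\|f\|_1 + Y^{-1}\|f''\|_1,
\end{equation}
using the first bound on the short range and the second on the tail. Optimizing $Y = (\|f''\|_1/\|f\|_1)^{1/2}$ gives $\|\widehat{f}\|_1 \ll (\|f\|_1 \|f''\|_1)^{1/2}$, and AM-GM then upgrades this to $\|\widehat{f}\|_1 \ll \|f\|_1 + \|f''\|_1$ with an absolute implied constant.

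There is no real obstacle here; the only minor care needed is the degenerate cases where $\|f\|_1$ or $\|f''\|_1$ vanishes, which are trivial (if $\|f\|_1 = 0$ then $f \equiv 0$; if $\|f''\|_1 = 0$ and $f$ is Schwartz then $f \equiv 0$), so the optimization is legitimate and the claimed bound holds uniformly.
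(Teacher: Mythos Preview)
Your argument is correct and is precisely the approach the paper has in mind: the paper's proof is the one-line remark ``This follows from Fourier inversion and integration by parts,'' and your writeup simply fleshes out that hint with the standard splitting and optimization. There is nothing to add.
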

This follows from Fourier inversion and integration by parts.  To give an idea of how we wish to use Lemma \ref{lemma:Fourier} to separate variables, consider the following example.
\begin{myexample}
\label{example:separationofvariables}
Suppose that 
$f$ is Schwartz-class.  Moreover suppose that $\gamma_m$ and $\delta_n$ are some sequences of real numbers and that $I$ is some finite set of integers.
Then
\begin{equation}
\label{eq:exampleformula}
\max_{|{\bf b}| = 1}
\Big| \sum_{m,n \in I} b_m \overline{b_n} f(\gamma_m + \delta_n) \Big|
\leq \|\widehat{f}\|_1 \max_{|{\bf b}| = 1} \Big|\sum_{m \in I} b_m \Big|^2.
\end{equation}
where $|{\bf b}| = (\sum_{n \in I} |b_n|^2)^{1/2}$.
\end{myexample}
\begin{proof}
By Lemma \ref{lemma:Fourier}, we have
\begin{align}
\Big| \sum_{m,n} b_m \overline{b_n} f(\gamma_m + \delta_n) \Big|
&=
\Big| \intR \widehat{f}(y)
\Big(\sum_m b_m e(\gamma_m y) \Big)
\Big(\sum_n \overline{b_n} e(\delta_n y) \Big)
\Big| dy
\\
& \leq 
\| \widehat{f} \|_1 
\max_{y \in \mathbb{R}}
\Big|\sum_m b_m e(\gamma_m y) \Big|
\Big|\sum_n \overline{b_n} e(\delta_n y) \Big|.
\end{align}
Taking the maximum over $|{\bf b}| = 1$ immediately gives the result.
\end{proof}

\section{Definitions of norms and some relations between them}
\label{section:combinatorial}
We begin by defining the basic norm that appears (implicitly) in Theorem \ref{thm:mainthm}:
\begin{equation}
\Delta_1(\mathcal{F}_V, N) 
=
\max_{|{\bf a}| = 1} 
\sum_{F \in \mathcal{F}_V^{\text{cusp}}} \frac{1}{\omega_F}
\Big|
\sum_{N \leq n \leq 2N} a_n \lambda_F(1,n)\Big|^2.
\end{equation}
By the duality principle (cf. \cite[p.170]{IK}), we have $\Delta_1(\mathcal{F}_{V}, N)  = \Delta^{(1)}(\mathcal{F}_{V}, N)$, where
\begin{equation}
\label{eq:Delta1DefDual}
\Delta^{(1)}(\mathcal{F}_{V}, N)
=
\max_{|{\bf b}| = 1} 
\sum_{N \leq n \leq 2N}
\Big|
\sum_{F \in \mathcal{F}_{V}^{\text{cusp}}}
 b_F \omega_F^{-1/2} \lambda_F(1,n)\Big|^2.
\end{equation}
We also define a related norm $\Delta_2(\mathcal{F}_{V}, N) = \Delta^{(2)}(\mathcal{F}_{V}, N)$ by
\begin{equation}
\Delta_2(\mathcal{F}_{V}, N) 
=
\max_{|{\bf a}| = 1} 
\sum_{F \in \mathcal{F}_{V}^{\text{cusp}}} \frac{1}{\omega_F}
\Big|
\sum_{N \leq m^2 n \leq 2N} a_{m,n} \lambda_F(m,n)\Big|^2,
\end{equation}
and where 
\begin{equation}
\label{eq:Delta2DefDual}
\Delta^{(2)}(\mathcal{F}_{V}, N)
=
\max_{|{\bf b}| = 1} 
\sum_{N \leq m^2 n \leq 2N}
\Big|
\sum_{F \in \mathcal{F}_{V}^{\text{cusp}}}
 b_F \omega_F^{-1/2} \lambda_F(m,n)\Big|^2.
\end{equation}
Finally we define a third norm $\Delta_3(\mathcal{F}_{V}, N) = \Delta^{(3)}(\mathcal{F}_{V}, N)$ by
\begin{equation}
\Delta_3(\mathcal{F}_{V}, N) 
=
\max_{|{\bf a}| = 1} 
\sum_{F \in \mathcal{F}_{V}^{\text{cusp}}} \frac{1}{\omega_F}
\Big|
\sum_{N \leq d^3 m^2 n \leq 2N} a_{d,m,n} \lambda_F(m,n)\Big|^2,
\end{equation}
and where 
\begin{equation}
\label{eq:Delta3DefDual}
\Delta^{(3)}(\mathcal{F}_{V}, N)
=
\max_{|{\bf b}| = 1} 
\sum_{N \leq d^3 m^2 n \leq 2N}
\Big|
\sum_{F \in \mathcal{F}_{V}^{\text{cusp}}}
 b_F \omega_F^{-1/2} \lambda_F(m,n)\Big|^2.
\end{equation}

We obviously have $\Delta_1(\mathcal{F}_{V}, N) \leq \Delta_2(\mathcal{F}_{V}, N) \leq \Delta_3(\mathcal{F}_{V}, N)$.  We also want relations in the other direction.
\begin{mylemma}
\label{lemma:Delta3intermsofDelta2}
We have
\begin{equation}
\label{eq:Delta3intermsofDelta2}
\Delta_3(\mathcal{F}_{V}, N) \ll
(\log N)
\max_{R \ll N }
\Big(\frac{N}{R}\Big)^{1/3} 
\Delta_2(\mathcal{F}_{V}, R).
\end{equation}
\end{mylemma}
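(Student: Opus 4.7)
The plan is to invoke duality, regroup by the variable $d$, and reduce each slice to a $\Delta_2$-problem at the appropriate scale. By the duality principle, $\Delta_3(\mathcal{F}_V, N) = \Delta^{(3)}(\mathcal{F}_V, N)$ and $\Delta_2(\mathcal{F}_V, R) = \Delta^{(2)}(\mathcal{F}_V, R)$, so it suffices to compare the dual norms. The crucial observation is that in the dual formulation \eqref{eq:Delta3DefDual} the Hecke eigenvalue $\lambda_F(m,n)$ does not depend on $d$, so $d$ merely indexes admissible dyadic ranges of $m^2 n$.

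First, I would fix an arbitrary ${\bf b}$ with $|{\bf b}| = 1$ and rewrite the constraint $N \leq d^3 m^2 n \leq 2N$ slice-by-slice in $d$, noting that it says precisely $m^2 n \in [N/d^3, 2N/d^3]$ for each $d \leq (2N)^{1/3}$. This gives
\begin{equation*}
\sum_{N \leq d^3 m^2 n \leq 2N} \Big|\sum_{F \in \mathcal{F}_V^{\text{cusp}}} b_F \omega_F^{-1/2} \lambda_F(m,n)\Big|^2 = \sum_{d \leq (2N)^{1/3}} \sum_{N/d^3 \leq m^2 n \leq 2N/d^3} \Big|\sum_F b_F \omega_F^{-1/2} \lambda_F(m,n)\Big|^2.
\end{equation*}
Using the trivial inequality $\max_{{\bf b}} \sum_d \leq \sum_d \max_{{\bf b}}$, and observing that the range $[N/d^3, 2N/d^3]$ is exactly the one appearing in the definition \eqref{eq:Delta2DefDual} of $\Delta^{(2)}$, each inner sum is at most $\Delta^{(2)}(\mathcal{F}_V, N/d^3)$. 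Therefore
\begin{equation*}
\Delta^{(3)}(\mathcal{F}_V, N) \leq \sum_{d \leq (2N)^{1/3}} \Delta^{(2)}(\mathcal{F}_V, N/d^3).
\end{equation*}

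Finally, I would dyadically decompose the $d$-sum into $O(\log N)$ ranges $d \sim D$. For such $d$ there are $O(D)$ values and $N/d^3 \asymp N/D^3$; setting $R = N/d^3$, one has $D \asymp (N/R)^{1/3}$. Hence each dyadic block contributes at most $O(D) \cdot \sup_{d \sim D} \Delta^{(2)}(\mathcal{F}_V, N/d^3) \ll (N/R)^{1/3}\Delta^{(2)}(\mathcal{F}_V, R) \leq \max_{R \ll N}(N/R)^{1/3}\Delta^{(2)}(\mathcal{F}_V, R)$, and summing over the $O(\log N)$ dyadic values of $D$ yields the claim after another application of $\Delta^{(2)} = \Delta_2$.

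There is no genuinely hard step here: the lemma is essentially a combinatorial reorganization, and the pointwise inequality between quadratic forms is valid only because the $d$-variable is inert under the dual pairing with $\lambda_F$. The sole subtlety to verify is that dyadic ranges in $d$ are in exact correspondence with dyadic ranges in $R = N/d^3$, so that the combinatorial factor $D$ from counting $d \sim D$ matches the analytic weight $(N/R)^{1/3}$ in the stated bound.
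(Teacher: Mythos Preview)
Your proof is correct and follows essentially the same approach as the paper: work on the dual side, exploit that the summand $\lambda_F(m,n)$ is independent of $d$, and apply a dyadic decomposition. The only cosmetic difference is that the paper decomposes dyadically in the variable $m^2 n \asymp R$ (and then counts the $\ll (N/R)^{1/3}$ admissible values of $d$), whereas you decompose dyadically in $d \sim D$ and substitute $R = N/D^3$; the two are equivalent reorganizations of the same count.
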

\begin{proof}
We prove this on the dual side, using \eqref{eq:Delta3DefDual} and \eqref{eq:Delta2DefDual}.  By breaking the sum up so $R \leq m^2 n \leq 2R$ and summing $R$ over dyadic segments, we obtain
\begin{equation}
 \Delta^{(3)}(\mathcal{F}_{V}, N)
 \ll (\log N)
 \max_{1 \ll R \ll N} \Big(\frac{N}{R}\Big)^{1/3}
 \max_{|{\bf b}| = 1} 
\sum_{R \leq m^2 n \leq 2R}
\Big|
\sum_{F \in \mathcal{F}_{V}^{\text{cusp}}}
 b_F \omega_F^{-1/2} \lambda_F(m,n)\Big|^2.
\end{equation}
The result follows immediately.
\end{proof}

\begin{mylemma}
\label{lemma:Delta2intermsofDelta1}
We have
\begin{equation}
\label{eq:Delta2intermsofDelta1}
\Delta_2(\mathcal{F}_{V}, N) \ll
(NT)^{\varepsilon}
\max_{Y^2 X \ll N }
\min\Big( Y \Delta_1(\mathcal{F}_{V}, X), X \Delta_1(\mathcal{F}_{V}, Y) \Big).
\end{equation}
\end{mylemma}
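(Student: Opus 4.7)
The plan is to reduce $\Delta_2$ to $\Delta_1$ via the Hecke multiplicativity relation \eqref{eq:HeckeRelation} together with the symmetry \eqref{eq:HeckeDuality}, which combine to give
\begin{equation*}
\lambda_F(m,n) = \sum_{d \mid (m,n)} \mu(d) \, \overline{\lambda_F(1, m/d)} \, \lambda_F(1, n/d).
\end{equation*}
Once this substitution is made, Cauchy--Schwarz separates the $m$ and $n$ variables: one factor of $\lambda_F(1, \cdot)$ is dispatched by the convexity bound of Lemma \ref{lemma:convexity}, while the other is controlled by $\Delta_1$.

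First I would dyadically decompose the $m$ and $n$ sums into ranges $m \asymp Y$, $n \asymp X$ with $Y^2 X \asymp N$, and further decompose $d \asymp D$ with $D \leq \min(Y, X)$.  Writing $m = dm'$, $n = dn'$, the sum under investigation becomes $\sum_{d \asymp D} \mu(d) S_d(F)$, where
\begin{equation*}
S_d(F) = \sum_{m' \asymp Y/D,\, n' \asymp X/D} a_{dm', dn'} \, \overline{\lambda_F(1, m')} \, \lambda_F(1, n').
\end{equation*}

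The heart of the proof is a double application of Cauchy--Schwarz.  On the outer $d$-sum, $|\sum_{d \asymp D} \mu(d) S_d(F)|^2 \leq D \sum_{d \asymp D} |S_d(F)|^2$, losing a factor $D$.  On each $S_d(F)$, Cauchy--Schwarz in $m'$ combined with Lemma \ref{lemma:convexity} (which gives $\sum_{m' \ll Y/D} |\lambda_F(1, m')|^2 \ll (Y/D)^{1+\varepsilon}$) isolates an inner $n'$-sum that, for each fixed $m'$, is controlled by $\Delta_1(\mathcal{F}_V, X/D)$.  Summing over $F$, applying the divisor bound $\sum_{d \asymp D} \sum_{m', n'} |a_{dm', dn'}|^2 \ll N^{\varepsilon} |{\bf a}|^2$, and observing that the outer factor $D$ cancels cleanly against the inner $(Y/D)$, yields
\begin{equation*}
\sum_F \frac{1}{\omega_F} \Big| \sum_{d \asymp D} \mu(d) S_d(F) \Big|^2 \ll N^{\varepsilon} \, Y \, \Delta_1(\mathcal{F}_V, X/D) \, |{\bf a}|^2.
\end{equation*}
The symmetric argument, applying Cauchy--Schwarz in $n'$ instead, produces the companion bound with $X \, \Delta_1(\mathcal{F}_V, Y/D)$.

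Taking the minimum of these two bounds for each $(Y, X, D)$ and summing dyadically over all parameters yields \eqref{eq:Delta2intermsofDelta1}, once one notes that $\Delta_1(\mathcal{F}_V, \cdot)$ is essentially non-decreasing in its size parameter, so the worst dyadic $D$ is $D = 1$ and the minimum there is exactly $\min(Y \Delta_1(\mathcal{F}_V, X), X \Delta_1(\mathcal{F}_V, Y))$.  I expect the main delicacy to be the cancellation between the factor $D$ from Cauchy--Schwarz on the $d$-sum and the factor $(Y/D)$ from the convexity bound; without it the method would lose a factor $D$ per dyadic shell and the resulting estimate would be far too weak to be useful in Section \ref{section:combinatorial}.
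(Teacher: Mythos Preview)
Your argument is correct and uses the same ingredients as the paper (the Hecke relation \eqref{eq:HeckeRelation}, Cauchy--Schwarz to extract the $d$-sum, the convexity bound of Lemma \ref{lemma:convexity} on one Hecke factor, and the definition of $\Delta_1$ on the other), so the two proofs are essentially the same.  The only real difference is that the paper works on the \emph{dual} side: after the Hecke relation it absorbs $\lambda_F(m,1)$ into the spectral coefficients via $b_F' = b_F \lambda_F(m,1)$, applies $\Delta^{(1)}(\mathcal{F}_V,X)$ directly, and then invokes Lemma \ref{lemma:convexity} on $\sum_{dm \asymp Y} |\lambda_F(m,1)|^2$.  This avoids your explicit Cauchy--Schwarz in $m'$ and makes the bookkeeping with the $d$-variable a bit cleaner (no separate dyadic decomposition in $D$ is needed).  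One small remark: your appeal to monotonicity of $\Delta_1(\mathcal{F}_V,\cdot)$ to reduce to $D=1$ is not required; since $Y^2(X/D) \leq Y^2 X \ll N$, the term $Y\,\Delta_1(\mathcal{F}_V,X/D)$ is already dominated by $\max_{Y'^2 X' \ll N} Y'\,\Delta_1(\mathcal{F}_V,X')$ directly.
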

\begin{proof}
Again, we prove this on the dual side, using \eqref{eq:Delta2DefDual} and \eqref{eq:Delta1DefDual}.
 By the Hecke relation \eqref{eq:HeckeRelation}, we deduce
  \begin{equation}
\Delta^{(2)}(\mathcal{F}_{V}, N)
=
\max_{|{\bf b}| = 1}
\sum_{N \leq m^2 n \leq 2N}
\Big|
\sum_{d|(m,n)}  \mu(d)
\sum_{F \in \mathcal{F}_{V}^{\text{cusp}}}
 b_F \omega_F^{-1/2} \lambda_F\Big(\frac{m}{d},1 \Big) \lambda_F\Big(1,\frac{n}{d}\Big) \Big|^2.
\end{equation} 
Applying Cauchy's inequality and a divisor function bound to take the sum over $d$ to the outside,
 we deduce
 \begin{equation}
\Delta^{(2)}(\mathcal{F}_{V}, N)
\ll N^{\varepsilon}
\max_{|{\bf b}| = 1}
\sum_{N \leq m^2 n \leq 2N}
\sum_{d|(m,n)} 
\Big|
\sum_{F \in \mathcal{F}_{V}^{\text{cusp}}}
 b_F \omega_F^{-1/2} \lambda_F\Big(\frac{m}{d},1 \Big) \lambda_F\Big(1,\frac{n}{d}\Big) \Big|^2.
\end{equation} 
Interchanging the order of summation and changing variables $m \rightarrow dm$ and $n \rightarrow dn$, we obtain
 \begin{equation}
\Delta^{(2)}(\mathcal{F}_{V}, N)
\ll N^{\varepsilon}
\max_{|{\bf b}| = 1}
\sum_{N \leq d^3 m^2 n \leq 2N}
\Big|
\sum_{F \in \mathcal{F}_{V}^{\text{cusp}}}
 b_F \omega_F^{-1/2} \lambda_F(m,1) \lambda_F(1,n) \Big|^2.
\end{equation}
Now we further restrict $d$, $m$ and $n$ so $d m \asymp Y$ and $n \asymp X$, and let $b_F' = b_F \lambda_F(m,1)$.  Then
\begin{align*}
\Delta^{(2)}(\mathcal{F}_{V}, N)
&\ll N^{\varepsilon}
\max_{XY^2 \ll N}
\max_{|{\bf b}| = 1}
\sum_{\substack{N \leq d^3 m^2 n \leq 2N \\ dm \asymp Y \\ n \asymp X}}
\Big|
\sum_{F \in \mathcal{F}_{V}^{\text{cusp}}}
 b_F' \omega_F^{-1/2} \lambda_F(1,n) \Big|^2
 \\
 &\ll N^{\varepsilon}
\max_{XY^2 \ll N} \max_{|{\bf b}| = 1}
\sum_{dm \asymp Y} \Delta^{(1)}(\mathcal{F}_V, X)
\sum_{F \in \mathcal{F}_{V}^{\text{cusp}}} |b_F|^2 |\lambda_F(m,1)|^2
.
\end{align*}
From Lemma \ref{lemma:convexity} we deduce
$
\sum_{dm \asymp Y} |\lambda_F(m,1)|^2 \ll Y (NT)^{\varepsilon},
$
uniformly in $F$,
leading to $$\Delta^{(2)}(\mathcal{F}_V, N) \ll (NT)^{\varepsilon} \max_{Y^2 X \ll N} Y \Delta^{(1)}(\mathcal{F}_V, X).$$

It remains to show that a similar bound holds but with $Y \Delta^{(1)}(\mathcal{F}_{V}, X)$ replaced by 
$X \Delta^{(1)}(\mathcal{F}_{V}, Y)$.  This follows by going through the same proof but reversing the roles of $m$ and $n$, and using \eqref{eq:HeckeDuality} along the way.
\end{proof}

Chaining together Lemmas \ref{lemma:Delta3intermsofDelta2} and \ref{lemma:Delta2intermsofDelta1}, we immediately deduce the following.
\begin{mylemma}
\label{lemma:Delta3intermsofDelta1}
We have
\begin{equation}
\label{eq:Delta3intermsofDelta1}
\Delta_3(\mathcal{F}_{V}, N) \ll
(NT)^{\varepsilon}
\max_{Y^2 X \ll N }
\Big(\frac{N}{XY^2}\Big)^{1/3} 
\min\Big( Y \Delta_1(\mathcal{F}_{V}, X), X \Delta_1(\mathcal{F}_{V}, Y) \Big).
\end{equation}
\end{mylemma}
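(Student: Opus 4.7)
The plan is simply to chain together the two preceding lemmas, which is the path indicated by the text. First I would invoke Lemma \ref{lemma:Delta3intermsofDelta2} to reduce $\Delta_3(\mathcal{F}_V, N)$ to a dyadic maximum of $(N/R)^{1/3} \Delta_2(\mathcal{F}_V, R)$ over $R \ll N$, absorbing the $\log N$ factor into $(NT)^{\varepsilon}$. Then into each $\Delta_2(\mathcal{F}_V, R)$ I would substitute the bound from Lemma \ref{lemma:Delta2intermsofDelta1}, producing an inner maximum over parameters $Y^2 X \ll R$ of the quantity $\min(Y \Delta_1(\mathcal{F}_V, X), X \Delta_1(\mathcal{F}_V, Y))$.

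The only thing to observe is that after combining the two maxima, we have a maximum over triples $(R, X, Y)$ with $Y^2 X \ll R \ll N$ of the expression
\begin{equation}
\Big(\frac{N}{R}\Big)^{1/3} \min\bigl( Y \Delta_1(\mathcal{F}_{V}, X), X \Delta_1(\mathcal{F}_{V}, Y) \bigr).
\end{equation}
Since $R$ appears only in the factor $(N/R)^{1/3}$ and this factor is decreasing in $R$, the supremum in $R$ is attained at the smallest allowed value, namely $R = Y^2 X$. Substituting this choice gives $(N/(XY^2))^{1/3}$ in place of $(N/R)^{1/3}$, and the constraint $R \ll N$ becomes $Y^2 X \ll N$, which matches the range in the statement.

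Putting this together yields
\begin{equation}
\Delta_3(\mathcal{F}_V, N) \ll (NT)^{\varepsilon} \max_{Y^2 X \ll N} \Big(\frac{N}{XY^2}\Big)^{1/3} \min\bigl( Y \Delta_1(\mathcal{F}_V, X), X \Delta_1(\mathcal{F}_V, Y) \bigr),
\end{equation}
which is precisely \eqref{eq:Delta3intermsofDelta1}. There is no substantive obstacle here: the two ingredient lemmas do all the work, and the combinatorial step is just the monotonicity observation above together with tracking the admissible range of parameters.
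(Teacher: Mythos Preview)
Your proposal is correct and follows exactly the approach of the paper, which simply states that the result is obtained by chaining together Lemmas \ref{lemma:Delta3intermsofDelta2} and \ref{lemma:Delta2intermsofDelta1}. Your monotonicity observation to collapse the double maximum over $(R,X,Y)$ to a single maximum over $(X,Y)$ with $R = XY^2$ is the routine step the paper leaves implicit.
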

See Section \ref{section:cubic} for a comparison of Lemma \ref{lemma:Delta3intermsofDelta1} with \cite[Lemma 6]{HeathBrownCubicSieve}.  

We also observe that the analogs of Lemmas \ref{lemma:Delta3intermsofDelta2}\textendash\ref{lemma:Delta3intermsofDelta1} hold equally well for Eisenstein series.

\section{Functional equation}
In this section we use the functional equation of the Rankin-Selberg $L$-function to deduce the following estimate.
\begin{mylemma}
\label{lemma:FunctionalEquation}
We have
\begin{equation}
\label{eq:FunctionalEquation}
\Delta^{(3)}(\mathcal{F}_1, N) \ll N +  \frac{N}{T^3} (NT)^{\varepsilon}
\max_{1 \leq Z \ll \frac{T^6}{N} (TN)^{\varepsilon}}
\Delta^{(3)}\Big(\mathcal{F}_1, Z \Big).
\end{equation}
\end{mylemma}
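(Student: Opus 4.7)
The plan is to apply the Rankin--Selberg functional equation to dualize the sum. Opening the square in \eqref{eq:Delta3DefDual} gives
\begin{equation*}
\Delta^{(3)}(\mathcal{F}_1, N) = \max_{|{\bf b}| = 1} \sum_{F, G \in \mathcal{F}_1^{\text{cusp}}} \frac{b_F \overline{b_G}}{\sqrt{\omega_F \omega_G}} \sum_{N \leq d^3 m^2 n \leq 2N} \lambda_F(m,n) \overline{\lambda_G(m,n)}.
\end{equation*}
Replacing the sharp characteristic function by a smooth bump $W$ supported on $[1,2]$ applied to $d^3 m^2 n / N$, and using Mellin inversion together with \eqref{eq:RankinSelbergDefinition}, the inner sum becomes $\frac{1}{2\pi i} \int_{(2)} \widetilde{W}(s) N^s L(s, F \otimes \overline{G})\, ds$. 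I would shift the contour to $\operatorname{Re}(s) = -\varepsilon$, picking up the simple pole of $L(s, F \otimes \overline{F})$ at $s = 1$ with residue $\omega_F$ (only when $F = G$); after inserting the weights this contributes $\widetilde{W}(1) N \sum_F |b_F|^2 \ll N$, which is the first term of \eqref{eq:FunctionalEquation}.

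The heart of the argument is the integral on $\operatorname{Re}(s) = -\varepsilon$, which I would treat via \eqref{eq:RankinSelbergFunctionalEquation}. For $F, G \in \mathcal{F}_1^{\text{cusp}}$ the spectral parameters satisfy $\mu_F - \mu_G = O(1)$, so among the nine arguments $s + \mu_i(F) + \overline{\mu_j(G)}$ in \eqref{eq:gammafactordef}, the three diagonal ones ($i = j$) have imaginary part $O(1)$ while the six off-diagonal ones have imaginary part of size $T$. Stirling's formula then gives
\begin{equation*}
\Big| \frac{\gamma(1-s, G \otimes \overline{F})}{\gamma(s, F \otimes \overline{G})} \Big| \asymp T^{6(1/2 - \operatorname{Re}(s))} = T^{3 + O(\varepsilon)}
\end{equation*}
on $\operatorname{Re}(s) = -\varepsilon$ (with rapid decay once $|\operatorname{Im}(s)| > T^{\varepsilon}$), while the derivative of the phase of the gamma ratio in $\operatorname{Im}(s)$ at $\operatorname{Im}(s) = 0$ equals $-\log T^{6} + O(1)$.

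Expanding $L(1-s, G \otimes \overline{F})$ as a Dirichlet series and interchanging summation with the integral, the oscillating factor $(d^3 m^2 n)^{it}$ of each term combines with the phase of the gamma ratio. A stationary-phase / repeated integration-by-parts argument then shows that the integrand is negligible unless $d^3 m^2 n \asymp T^{6}/N$, up to a $(TN)^{\varepsilon}$ factor. After a dyadic decomposition of the dual length and a separation of variables via Lemma \ref{lemma:Fourier}, the off-diagonal contribution is bounded by $T^{3+\varepsilon}/Z$ times
\begin{equation*}
\max_{|{\bf b}| = 1} \sum_{F, G} \frac{b_F \overline{b_G}}{\sqrt{\omega_F \omega_G}} \sum_{d^3 m^2 n \asymp Z} \lambda_G(m,n) \overline{\lambda_F(m,n)}
\end{equation*}
for some dyadic $Z \ll (T^6/N)(TN)^{\varepsilon}$. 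Using $\overline{\lambda_F(m,n)} = \lambda_F(n, m)$ from \eqref{eq:HeckeDuality}, relabelling $m \leftrightarrow n$, and replacing ${\bf b}$ by $\overline{{\bf b}}$ (which preserves the unit-ball constraint), this bilinear form equals $\Delta^{(3)}(\mathcal{F}_1, Z)$. Since $T^3/Z \asymp N/T^3$ when $Z \asymp T^6/N$, the bound \eqref{eq:FunctionalEquation} follows after summing over dyadic $Z$.

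The main obstacle will be executing the stationary-phase analysis cleanly, uniformly in $F, G \in \mathcal{F}_1^{\text{cusp}}$, and separating the variables $(d, m, n)$ from the weight so that the off-diagonal error admits the clean $\Delta^{(3)}$ bound. The three near-diagonal gamma factors with $O(1)$ arguments are bounded and essentially non-oscillatory, so they contribute only constants; it is the six off-diagonal factors that produce both the $T^3$ factor in magnitude and the $T^6$ scale in the phase that drives the localization $d^3 m^2 n \asymp T^6/N$ on the dual side.
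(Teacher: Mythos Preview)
Your approach is essentially the paper's: smooth the cutoff, open the square into Rankin--Selberg $L$-functions, shift past the pole at $s=1$, apply the functional equation, and bound the dual side by $\Delta^{(3)}$ at scale $\ll T^6/N$. Two corrections are needed. First, a bump $W$ supported on $[1,2]$ cannot majorize the sharp cutoff; one instead takes $W$ smooth, nonnegative, supported on a slightly larger interval, with $W(x)\geq 1$ for $1\leq x\leq 2$, so that $\Delta^{(3)}$ is \emph{bounded above} by the smoothed quantity. Second, and more substantively, the crucial separation of variables is not between $(d,m,n)$ and the weight but between $\mu_F$ and $\mu_G$ inside the gamma ratio: that ratio depends on both spectral parameters jointly through the sums $\mu_i(F)+\overline{\mu_j(G)}$, and unless the resulting weight factors as an integral of products (function of $F$)$\times$(function of $G$), the $F,G$ double sum cannot be recombined into a perfect square and hence into $\Delta^{(3)}$. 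Because the spectral parameters enter additively, this is exactly the situation handled by Lemma~\ref{lemma:Fourier} (cf.\ Example~\ref{example:separationofvariables}), and the paper carries this step out explicitly. As a minor tactical point, the paper truncates the dual Dirichlet series by shifting the contour far to the right and back rather than by stationary phase; this avoids a detailed oscillatory analysis of the three near-diagonal gamma factors and directly yields the one-sided bound $d^3 m^2 n \ll (T^6/N)(NT)^{\varepsilon}$, which is all that is required.
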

The proof of Lemma \ref{lemma:FunctionalEquation} crucially uses that the family is restricted to cusp forms.  The reader may examine the proof of Proposition \ref{prop:lowerbound} below to see how the family of Eisenstein series exhibits different behavior than the cusp forms.

\begin{proof}
Select a smooth nonnegative bump function $w$ with compact support on the positive reals, satisfying $w(x) \geq 1$ for $1 \leq x \leq 2$.  Then
\begin{equation}
\Delta^{(3)}(\mathcal{F}_{V}, N)
\leq 
\max_{|{\bf b}| = 1} 
\sum_{d,m,n} w\Big(\frac{d^3 m^2 n}{N}\Big)
\Big|
\sum_{F \in \mathcal{F}_{V}^{\text{cusp}}}
 b_F \omega_F^{-1/2} \lambda_F(m,n)\Big|^2.
\end{equation}
Next open the square, apply Mellin inversion, and evaluate the resulting Dirichlet series using \eqref{eq:RankinSelbergDefinition}, giving
\begin{equation}
\Delta^{(3)}(\mathcal{F}_{V}, N)
\leq 
\max_{|{\bf b}| = 1} 
\sum_{F, G \in \mathcal{F}_{V}^{\text{cusp}}}
 \frac{b_F \overline{b_G}}{\omega_F^{1/2} \omega_G^{1/2}} 
\frac{1}{2 \pi i} \int_{(3/2)} N^s \widetilde{w}(s) L(s, F \otimes \overline{G}) ds.
\end{equation}
Next we shift the contour of integration to the line $\text{Re}(s) = - \varepsilon$, change variables $s \rightarrow 1-s$, and apply the functional equation \eqref{eq:RankinSelbergFunctionalEquation}.  In this process we cross a potential pole at $s=1$ only, which exists if and only if $F=G$.  This pole contributes the term of size $O(N)$ to the right hand side of \eqref{eq:FunctionalEquation}.  In all we obtain $\Delta^{(3)}(\mathcal{F}_{V}, N)$ is at most $O(N)$ plus
\begin{equation}
\label{eq:afterFunctionalEquation}
\max_{|{\bf b}| = 1} 
\Big|
\sum_{F, G \in \mathcal{F}_{V}^{\text{cusp}}}
 \frac{b_F \overline{b_G}}{\omega_F^{1/2} \omega_G^{1/2}}
\frac{1}{2 \pi i} \int_{(3/2)} N^{1-s} \widetilde{w}(1-s)
\frac{\gamma(s, \mu_G, \mu_F )}{\gamma(1-s, \mu_F, \mu_G) }
 L(s, G \otimes \overline{F}) ds
 \Big|.
\end{equation}

Now we examine the ratio of gamma factors appearing in \eqref{eq:afterFunctionalEquation}.  Six out of the nine gamma factors in \eqref{eq:gammafactordef} have $|\mu_i(F) + \overline{\mu_j}(G)|$ large, of size $T$ (the precise size determined up to $O(1)$ by the location of the box $B$).  The remaining three gamma factors have $|\mu_i(F) + \overline{\mu_j}(G)|$ of size $O(1)$.
Moreover, since $F$ and $G$ automatically satisfy Ramanujan by the location of the box $B$, then $\mu_i(F) + \overline{\mu_j}(G) \in i \mathbb{R}$.
This means that for $\text{Re}(s) > 0$, we have that the ratio of gamma factors appearing in \eqref{eq:afterFunctionalEquation} is analytic, and satisfies the bound
\begin{equation}
\Big| Q^{\frac12 -s} \widetilde{w}(1-s) \frac{\gamma(s, \mu_G, \mu_F )}{\gamma(1-s, \mu_F, \mu_G) } \Big| 
\ll_{\text{Re}(s),A} (1+|s|)^{-A},
\end{equation}
for any $A > 0$, where $Q = T^6$.  Now in \eqref{eq:afterFunctionalEquation} we open up the Dirichlet series, obtaining
\begin{equation}
\label{eq:afterFunctionalEquation2}
\max_{|{\bf b}| = 1} 
\Big|
\sum_{d,m,n}
\sum_{F, G \in \mathcal{F}_{V}^{\text{cusp}}}
 \frac{b_F \overline{b_G} N}{ \omega_F^{1/2} \omega_G^{1/2} }
\frac{1}{2 \pi i} \int_{(3/2)}  \widetilde{w}(1-s)
\frac{\gamma(s, \mu_G, \mu_F )}{\gamma(1-s, \mu_F, \mu_G) }
 \frac{\lambda_G(m,n) \overline{\lambda_F}(m,n)}{(d^3 m^2 n N)^s} ds
 \Big|.
\end{equation}
We may truncate the Dirichlet series at $d^3 m^2 n \ll \frac{Q}{N} (TN)^{\varepsilon}$ with a very small error term (certainly smaller than the $O(N)$ term already accounted for), by shifting contours far to the right.  Having imposed this truncation, we may then shift the contour to the line $\text{Re}(s) = \varepsilon$.  
We may also truncate the integral at $|\text{Im}(s)| \ll (NT)^{\varepsilon}$, without producing a new error term.

Then \eqref{eq:afterFunctionalEquation2} is reduced to 
\begin{multline}
\label{eq:afterFunctionalEquation3}
\max_{|{\bf b}| = 1} 
\Big|
\sum_{d^3 m^2 n \ll \frac{T^6}{N} (NT)^{\varepsilon}}
\sum_{F, G \in \mathcal{F}_{V}^{\text{cusp}}}
 \frac{b_F \overline{b_G} N}{ \omega_F^{1/2} \omega_G^{1/2} }
 \\
\frac{1}{2 \pi i} \int_{\substack{\text{Re}(s) = \varepsilon \\ |\text{Im}(s)| \ll (NT)^{\varepsilon}}}  \widetilde{w}(1-s)
\frac{\gamma(s, \mu_G, \mu_F )}{\gamma(1-s, \mu_F, \mu_G) }
 \frac{\lambda_G(m,n) \overline{\lambda_F}(m,n)}{(d^3 m^2 n N)^s} ds
 \Big|.
\end{multline}

At a first pass, the reader is encouraged to ``pretend" that $\frac{\gamma(s, \mu_G, \mu_F )}{\gamma(1-s, \mu_F, \mu_G) }$ equals $Q^{s-\frac12}$ (which is a good first-order approximation) and continue with \eqref{eq:finalbound} to finish the proof.  Unfortunately, a rigorous argument is a bit more technical.  
The plan is to separate the variables $\mu_F$ and $\mu_G$ in the ratio of gamma factors.   The basic idea is encoded in Example \ref{example:separationofvariables}.
To this end, let $\mu_i(B)$, $i=1,2,3$ denote a point inside the box $B$ (the choice of point is irrelevant).  Then
\begin{equation}
\frac{\Gamma_{\mathbb{R}}(s + \mu_i(F) + \overline{\mu_j}(G))}{\Gamma_{\mathbb{R}}(1-s + \mu_i(F) + \overline{\mu_j}(G))}
=
\frac{\Gamma_{\mathbb{R}}(s + \mu_i(B) + \overline{\mu_j}(B) + i(\delta_i + \nu_j))}{\Gamma_{\mathbb{R}}(1-s + \mu_i(B) + \overline{\mu_j}(B) + i(\delta_i + \nu_j))},
\end{equation}
where $i\delta_i = \mu_i(F) - \mu_i(B)$ and $i\nu_j = \overline{\mu_j(G)} - \overline{\mu_j(B)}$.  Here $\delta_i, \nu_j = O(1)$ and are real.  The goal is to separate $\delta_i$ from $\nu_j$.  Let 
\begin{equation}
f(x) = 
\frac{\Gamma_{\mathbb{R}}(s + \mu_i(B) + \overline{\mu_j}(B) + ix)}
{\Gamma_{\mathbb{R}}(s + \mu_i(B) + \overline{\mu_j}(B))}
\frac{\Gamma_{\mathbb{R}}(1-s + \mu_i(B) + \overline{\mu_j}(B))}{\Gamma_{\mathbb{R}}(1-s + \mu_i(B) + \overline{\mu_j}(B) + ix)}.
\end{equation}
By Stirling, for $x\in \mathbb{R}$ and $|x| \ll 1$, we have $|f(x)| + |f''(x)| \ll T^{\varepsilon}$.  By Lemma \ref{lemma:Fourier} and \eqref{eq:exampleformula}, in effect this means we can separate the variables $\delta_i, \nu_j$ at ``cost'' at most $T^{\varepsilon}$.
Applying this with each of the gamma factors, we obtain that \eqref{eq:afterFunctionalEquation3} is bounded by
\begin{multline}
\label{eq:finalbound}
\frac{N}{T^3} (NT)^{\varepsilon} \max_{|{\bf b}| = 1} 
\sum_{d^3 m^2 n \ll \frac{T^6}{N} (NT)^{\varepsilon}}
\Big|
\sum_{F, G \in \mathcal{F}_{V}^{\text{cusp}}}
 \frac{b_F \overline{b_G} }{ \omega_F^{1/2} \omega_G^{1/2} }
\lambda_G(m,n) \overline{\lambda_F}(m,n)
 \Big|
 \\
 \ll \frac{N}{T^3} (NT)^{\varepsilon}
\max_{1 \leq Z \ll \frac{T^6}{N} (TN)^{\varepsilon}}
\Delta^{(3)}\Big(\mathcal{F}_1, Z \Big). \qedhere
\end{multline}
\end{proof}

\section{Completion of the proof}
Now we prove Theorem \ref{thm:mainthm}.  We chain together the results from Section \ref{section:combinatorial} as well as Lemma \ref{lemma:FunctionalEquation}, giving
\begin{multline}
\label{eq:chainofinequalities}
\Delta_1(\mathcal{F}_1, N) \leq
\Delta^{(3)}(\mathcal{F}_1, N)
\ll 
N + 
\frac{N}{T^3} (NT)^{\varepsilon}
\max_{1 \leq Z \ll \frac{T^6}{N} (TN)^{\varepsilon}}
\Delta^{(3)}\Big(\mathcal{F}_1, Z \Big)
\\
\ll
N + 
\frac{N}{T^3} (NT)^{\varepsilon}
\max_{Y^2 X \ll \frac{T^6}{N} (NT)^{\varepsilon}} 
\Big(\frac{T^6/N}{XY^2}\Big)^{1/3} 
\min\Big( Y \Delta_1(\mathcal{F}_1, X), X \Delta_1(\mathcal{F}_1, Y) \Big).
\end{multline}
This sequence of inequalities is reminiscent of (and somewhat inspired by) \cite[Section 8]{HeathBrownCubicSieve}.  Finally we insert the Blomer-Buttcane bound $\Delta_1(\mathcal{F}_1, M) \ll (T^3 + T^2 M) (TM)^{\varepsilon}$ from Theorem \ref{thm:BlomerButtcane}.  In all, we obtain
\begin{align*}
\Delta_1(\mathcal{F}_1, N)
&\ll 
N + 
\frac{N}{T^3} (NT)^{\varepsilon}
\max_{Y^2 X \leq \frac{T^6}{N}} 
\Big(\frac{T^6/N}{XY^2}\Big)^{1/3} 
\min\Big( Y (T^3 + T^2 X), X (T^3 + T^2 Y) \Big)
\\
&\ll
N + 
\frac{N}{T^3}  (NT)^{\varepsilon}
\max_{Y^2 X \leq \frac{T^6}{N}} 
\Big(\frac{T^6/N}{XY^2}\Big)^{1/3} 
T^2
\Big(XY + T \min(X,Y) \Big)
\\
&\ll 
N + \frac{N}{T^3} T^2 \Big(\frac{T^6}{N} + T \Big(\frac{T^6}{N}\Big)^{1/3}\Big) (NT)^{\varepsilon}
\ll N + (NT)^{\varepsilon} ((T^3 N)^{2/3} + T^5 ),
\end{align*}
completing the proof of Theorem \ref{thm:mainthm}.

\section{Cubic characters}
\label{section:cubic}
In this section we briefly recall the large sieve inequality of Heath-Brown for cubic characters \cite{HeathBrownCubicSieve} for the purpose of developing an analogy with the $SL_3(\mathbb{Z})$ cusp form family considered in this paper.

Let $\theta = \exp(2 \pi i/3)$, and for nonzero $m,n \in \mz[\theta]$ let $(m/n)_3$ denote the cubic residue symbol.  The cubic reciprocity law gives that $(m/n)_3 = (n/m)_3$.
Let $N(\cdot)$ denote the norm map of $\mathbb{Q}[\omega]/\mathbb{Q}$.  Let
\begin{equation}
\Delta_1(M,Q) = \max_{|{\bf a}| = 1}  \sumstar_{\substack{n \in \mz[\omega] \\ N(n) \leq M}}
\Big| \sumstar_{\substack{q \in \mz[\theta] \\ N(q) \leq Q}} a_q \Big(\frac{n}{q}\Big)_3 \Big|^2,
\end{equation}
where the symbol $\sum^{*}$ means the sums are restricted to (nonzero) square-free integers.
Heath-Brown's cubic large sieve is the bound
\begin{equation}
\label{eq:HB1}
\Delta_1(M,Q)
\ll (M + Q + (MQ)^{2/3})(MQ)^{\varepsilon}.
\end{equation}

To make the notation appear more similar to the $SL_3(\mathbb{Z})$ family, define (for $m,n,q \in \mz[\theta]$)
\begin{equation}
\lambda_q(m,n) = \Big(\frac{n}{q}\Big)_3 \overline{\Big(\frac{m}{q}\Big)_3}.
\end{equation}
Note the simple identities which the reader is invited to compare with Lemma \ref{lemma:Hecke}:
$$\lambda_q(m,n) = \overline{\lambda_q(n,m)} = \lambda_q(mn^2, 1) = \lambda_q(1,nm^2) = \lambda_q(m,1) \lambda_q(1,n).$$
Also observe $\lambda_q(d^3, 1) = 1$ for $(d,q) = 1$.

Heath-Brown's first step is to drop the condition that $n$ is square-free in \eqref{eq:HB1}, leading to the definition
\begin{equation}
\Delta_3(M,Q) = \max_{|{\bf a}| = 1}  \sum_{\substack{d,m,n \in \mz[\theta] \\ N(d^3 m^2 n) \leq M}} |\mu(mn)|
\Big| \sumstar_{\substack{q \in \mz[\theta] \\ N(q) \leq Q}} a_q \lambda_q(m,n) \Big|^2
\end{equation}
Obviously $\Delta_1(M,Q) \leq \Delta_3(M,Q)$, which parallels our relation $\Delta_1(\mathcal{F}_{V}, N) \leq \Delta_3(\mathcal{F}_{V}, N)$.  The same steps used to prove Lemma \ref{lemma:Delta3intermsofDelta1} can be applied here to show
\begin{equation}
\Delta_3(M,Q) \ll (MQ)^{\varepsilon}
\max_{XY^2 \ll M} \Big(\frac{M}{XY^2}\Big)^{1/3} 
\min(X \Delta_1(Y, Q), Y \Delta_1(X,Q)),
\end{equation}
which is essentially \cite[Lemma 6]{HeathBrownCubicSieve}.

Heath-Brown also gives a relationship between $\Delta_3(M,Q)$ and $\Delta_3(Q^2/M, Q)$ (see \cite[Lemmas 7 and 8]{HeathBrownCubicSieve} for the precise statement) which arises from the functional equation and is analogous to Lemma \ref{lemma:FunctionalEquation}.

\section{Lower bound via duality}
\label{section:lowerbound}
\begin{myprop}
\label{prop:lowerbound}
There exists a choice of vector ${\bf a}$ so that
\begin{equation}
\sum_{F \in \mathcal{F}_1^{\text{Eis}}} \frac{1}{\omega_F}
\Big| \sum_{N \leq n \leq 2N} a_n \lambda_F(1,n)\Big|^2 \gg (TN)^{1-\varepsilon} |{\bf a}|^2,
\end{equation}
for $N \gg T^{7/3 + \delta}$.
\end{myprop}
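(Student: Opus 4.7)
The plan is to invoke the duality principle and exhibit an explicit test vector; by positivity, this is equivalent to choosing a primal vector $\mathbf{a}$ for which the contribution of the Eisenstein spectrum alone is already large, which I find slightly more transparent. First I would parametrize $\mathcal{F}_1^{\text{Eis}}$: every $F \in \mathcal{F}^{\text{Eis}}$ is an Eisenstein series $F = E_{\phi,s}$ induced from an $SL_2(\mathbb{Z})$ Hecke-Maass cusp form $\phi$ with auxiliary spectral parameter $s$, and its Hecke eigenvalues factor as
\[
\lambda_{E_{\phi,s}}(1,n) = \sum_{ab=n} \lambda_\phi(a)\, a^{\alpha(s)} b^{\beta(s)},
\]
with $\alpha,\beta$ purely imaginary linear functions of $s$. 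Requiring $\mu_F$ to lie in a box of side $1$ located at a point of norm $T$ pins $s$ to an interval of length $O(1)$ and forces $t_\phi$ into an interval of length $O(1)$ near a fixed point of size $T$; by the local $SL_2$ Weyl law there are $\asymp T$ such $\phi$.

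I then take $a_n = w(n/N)$ for a smooth bump $w$ supported on $[1,2]$, so $|\mathbf{a}|^2 \asymp N$. Opening the divisor expansion and applying Mellin inversion on the inner $m$-sum,
\[
\sum_n a_n \lambda_F(1,n) = \sum_d \lambda_\phi(d)\, d^{\alpha(s)} \sum_m m^{\beta(s)} w(dm/N) \approx N\, \widetilde{w}(1+\beta(s)) \sum_{d \leq N} \lambda_\phi(d)\, d^{\alpha(s)-\beta(s)-1}.
\]
The incomplete Dirichlet series over $d$ is an approximation to $L(1-\alpha(s)+\beta(s), \phi)$; since $|L(1+it,\phi)| \gg T^{-\varepsilon}$ for a positive proportion of $\phi$ with $t_\phi \asymp T$ (by standard second-moment asymptotics for $L$-values on $\real s = 1$), this yields $|\sum_n a_n \lambda_F(1,n)|^2 \gg N^2 T^{-\varepsilon}$ for most admissible $(\phi, s)$.

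To assemble the bound, using the Plancherel-type normalization of $\omega_F^{-1}$ on the Eisenstein spectrum (as in \cite[Section 4]{BlomerButtcane}) and summing over the $\asymp T$ admissible $\phi$'s while integrating $s$ over an interval of length $1$ gives
\[
\sum_{F \in \mathcal{F}_1^{\text{Eis}}} \omega_F^{-1} \Big|\sum_n a_n \lambda_F(1,n)\Big|^2 \gg T \cdot N^2 \cdot T^{-\varepsilon},
\]
and dividing by $|\mathbf{a}|^2 \asymp N$ produces the claimed lower bound $(TN)^{1-\varepsilon}|\mathbf{a}|^2$. The main obstacle will be making rigorous the approximation $\sum_{d \leq N} \lambda_\phi(d)\, d^{s-1} \approx L(1-s, \phi)$: by the approximate functional equation this is valid once $N$ exceeds (essentially) the square root of the effective conductor of the $L$-value, and a more careful analysis exploiting subconvex bounds for Hecke-Maass $L$-functions on the $1$-line pins down the precise threshold to $N \gg T^{7/3+\delta}$, matching the hypothesis of the proposition. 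A secondary technical point is checking that the off-diagonal $d > 1$ contributions do not cancel the main term, which is automatic from the positivity of the spectral square.
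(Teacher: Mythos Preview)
Your choice of test vector $a_n = w(n/N)$ does not work. With the parametrization $\lambda_{E_{\phi,s}}(1,n) = \sum_{ab=n}\lambda_\phi(a)a^{\alpha}b^{\beta}$, the Mellin step gives a residue of the form $\widetilde{w}(1+\beta)\,N^{1+\beta}\,L(1+\beta-\alpha,\phi)$. But $\beta$ is purely imaginary of size $\asymp T$ (in the paper's normalization $\beta = 2it$ with $t = T+O(1)$), and since $w$ is a fixed compactly supported bump, $\widetilde{w}(1+\beta) \ll_A T^{-A}$. So your ``main term'' is annihilated, and $\sum_n a_n \lambda_F(1,n)$ is governed entirely by the contour integral you intended to treat as an error. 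In other words, a non-oscillatory smooth cutoff cannot detect the $\zeta$-pole because that pole sits at height $\asymp T$.

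The paper's argument sidesteps this by working on the dual side: it squares out first, getting a bilinear sum over pairs $(F,G) = (u_j,t;u_{j'},t')$ in the same $O(1)$-window, and the relevant Dirichlet series factors as $\zeta(s-2it+2it')L(s+it+2it',u_j)L(s-2it-it',u_{j'})L(s+it-it',u_j\otimes u_{j'})A(s)$. Now the $\zeta$-pole is at $s = 1+2i(t-t')$ with $|t-t'| = O(1)$, so it \emph{is} captured by the smooth cutoff. The error from the shifted contour is then controlled by Jutila--Motohashi on the two $GL_2$ factors ($T^{1/3}$ each) and convexity on the $GL_2\times GL_2$ factor ($T^{1/2}$), giving $N^{1/2}T^{7/6}$ against a polar term of size $N$; this is the origin of the threshold $N\gg T^{7/3}$. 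Your appeal to ``subconvex bounds on the $1$-line'' is misplaced: the bounds needed are on the critical line, and the specific exponent $7/3$ comes from this $GL_2\times GL_2$ Rankin--Selberg structure, which never appears in a primal-side argument. Your approach can be salvaged by taking $a_n = n^{-2it_0}w(n/N)$ for a fixed $t_0$ in the window, which recenters the pole to bounded height; carried through with Jutila--Motohashi plus the Weyl bound for $\zeta$, this would in fact yield the lower bound in a wider range than $T^{7/3}$.
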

Since a lower bound of this type was already proved in \cite{BlomerButtcane}, for brevity we only give a sketch which could be made rigorous with more work.
Blomer and Buttcane \cite[Section 4]{BlomerButtcane} showed that the lower bound of size ``$T^2 N$'' in \eqref{eq:BlomerButtcaneWide} comes from the Eisenstein series $E(z, 1/2+it, u_j)$ induced from $SL_2(\mathbb{Z})$ cusp forms $u_j$.
This Eisenstein series $E$ has Hecke eigenvalues
$$\lambda_E(1,n) = \lambda(n) = \sum_{d_1 d_2 = n} \lambda_j(d_1) d_1^{-it} d_2^{2it},$$
and Langlands parameters $\mu = (2it, -it+it_j, -it-it_j)$, where $t_j$ is the spectral parameter of $u_j$.  Moreover, $\omega_F = T^{o(1)}$, so we will drop this aspect in the proof.

\begin{proof}
To simplify notation, say that $B$ is the spectral ball of size $O(1)$ centered at $i(2T, T, -3T)$.  This means $t = T+ O(1)$ and $t_j = T + O(1)$.  
The contribution of this family of Eisenstein series, on the dual side, takes the form (after smoothing)
\begin{equation}
\label{eq:EisensteinExpression}
\mathcal{S} := \sum_{n} w(n/N) 
\Big|
\int_{t,t_j = T + O(1)} \beta_{t,t_j} \lambda(n) \Big|^2.
\end{equation}
Expanding the square, we obtain
\begin{equation}
\label{eq:EisensteinExpression2}
\mathcal{S} = 
\int_{t,t', t_j, t_j' = T + O(1)}
\beta \overline{\beta'} \frac{1}{2 \pi i}
\int_{(1+\varepsilon)} N^s \widetilde{w}(s) 
\underbrace{\sum_{n=1}^{\infty} \frac{\lambda(n) \overline{\lambda'}(n)}{n^s}}_{Z_{u_j,u_j', t, t'}(s)} ds.
\end{equation}
Note
\begin{multline}
\label{eq:DirichletSeriesEisCase}
Z_{u_j,u_j', t, t'}(s) = \sum_{d_1 d_2 = e_1 e_2}
\frac{\lambda_j(d_1) d_1^{-it} d_2^{2it} \lambda_j'(e_1) e_1^{it'} e_2^{-2it'} }{(d_1 d_2)^s}
\\
=
\prod_p (1 + 
p^{-s}[\lambda_j(p) \lambda_{j}'(p) p^{-it+it'} + \lambda_j(p) p^{-it-2it'} +  \lambda_j'(p) p^{2it+it'} + p^{2it-2it'}] + O_{u_j, u_j'}(p^{-2s}) ).
\end{multline}
With some care, including use of the convexity bound for $GL_2 \times GL_2$ $L$-functions due to Iwaniec \cite{IwaniecConvexity}, one may then derive
\begin{equation}
\label{eq:Lfunctions}
Z_{u_j,u_j', t, t'}(s) = 
\zeta(s-2it+2it') L(s+it+2it', u_j) L(s-2it-it', u_j') L(s+it-it', u_j \otimes u_j')
A(s),
\end{equation}
where $A(s) = A_{u_j, u_j', t, t'}(s)$ is given by an absolutely convergent Euler product for $\text{Re}(s) > 1/2$, satisfying $|A(\sigma+iy)| \ll_{\sigma} T^{\varepsilon}$, for $\sigma > 1/2$.

Returning to \eqref{eq:EisensteinExpression2}, we shift contours to the line $\text{Re}(s) = 1/2+\varepsilon$.  
Note the pole of zeta at $s=1+2it-2it'$ which occurs for all pairs $u_j, u_j'$.  
This polar term, say denoted $\mathcal{S}_0$, contributes (roughly)
\begin{equation}
\int_{t,t', t_j, t_j' = T + O(1)}
\beta \beta'  N^{1+2it-2it'} \widetilde{w}(1+2it-2it') 
L(1+3it, u_j) L(1-3it', u_j') L(1+3it-3it', u_j \otimes u_j').
\end{equation}
If we choose $\beta_{t,t_j} = L(1+3it, u_j)^{-1}$ (alternatively, one could take $\beta_{t,t_j} = \overline{L(1+3it, u_j)}$) then this polar term becomes approximately
\begin{equation}
N \sum_{t_j, t_j' = T+O(1)} L(1, u_j \otimes u_j') \approx NT^2.
\end{equation}
With a bit more care, one can derive $|\mathcal{S}_0| \gg (NT^2)^{1-\varepsilon}$ for this choice of $\beta$.  
  
Next we estimate the contribution to $\mathcal{S}$ from the new line of integration at $\text{Re}(s) = 1/2+\varepsilon$; call this $\mathcal{S}'$.  
Jutila and Motohashi \cite{JM} showed a Weyl bound for the $SL_2(\mathbb{Z})$ cusp forms,
namely
\begin{equation}
L(1/2+it, u_j) \ll (1 + |t| + |t_j|)^{1/3+\varepsilon}.
\end{equation}
 Combining this with the convexity bound for the $GL_2 \times GL_2$ factor in \eqref{eq:Lfunctions} (which has conductor of size $T^2$) gives $|Z(\sigma+iy)| \ll T^{7/6+\varepsilon}$.  
Note that the $\zeta$-factor is evaluated at $\sigma + i y$ with $|y| \ll T^{\varepsilon}$, so it practically gives no contribution here.
Hence, for this choice of $\beta$, we have
\begin{equation}
\mathcal{S}' \ll N^{1/2+\varepsilon} T^{7/6+\varepsilon} 
\int_{t,t',t_j,t_j' = T+O(1)} |\beta \beta'| \ll N^{1/2+\varepsilon} T^{7/6+\varepsilon} T^2.
\end{equation}
Thus
\begin{equation}
|\mathcal{S}| \gg (NT^2)^{1-\varepsilon} + O(N^{1/2+\varepsilon} T^{7/6+\varepsilon} T^2).
\end{equation}
Note the polar term dominates the error term provided $N \gg T^{7/3+\delta}$.
Finally, we observe that
\begin{equation}
\int_{t, t_j = T+O(1)} |\beta_{t,t_j}|^2 dt =  T^{1+o(1)},
\end{equation}
whence $|\mathcal{S}| \gg (NT)^{1-\varepsilon} \int |\beta|^2$ with this choice of $\beta$.
\end{proof}

\section{Loose ends}
We list a few possible directions for future work.
\begin{enumerate}
 \item Extend Theorem \ref{thm:mainthm} to cover the family $\mathcal{F}_{V}$ for more general $V$, with $1 \ll V \ll T$.
 \item Extend Theorem \ref{thm:BlomerButtcane}, which gives a bound on the norm $\Delta_1$ using the Kuznetsov formula, to directly give a bound on the norm $\Delta_2$ (modified to include the Eisenstein series as well as the cusp forms).
  The point would be to bypass the use of Lemma \ref{lemma:Delta2intermsofDelta1} in \eqref{eq:chainofinequalities}, though
 it is unclear if any improvement is possible this way.
 \item Is it possible to use the $SL_3(\mathbb{Z})$ Kuznetsov formula to directly bound the cuspidal part of the spectrum in the large sieve inequality? (By subtracting off the Eisenstein parts, which one would presumably then control with lower-rank tools such as the $GL_2$ Kuznetsov formula.)
 \item Can the term $T^2 N$ in \eqref{eq:BlomerButtcaneLocal} be reduced to $TN$, to match the lower bound from Proposition \ref{prop:lowerbound}?
 If so, this would immediately improve Theorem \ref{thm:mainthm} by replacing the term $T^5$ by $T^4$.
\end{enumerate}

\end{document}